\numberwithin{equation}{section}
\theoremstyle{plain}
\newtheorem{thm}{Theorem}[section]
\newtheorem{cor}[thm]{Corollary}
\newtheorem{question}[thm]{Problem}
\newtheorem{prop}[thm]{Proposition}
\theoremstyle{definition}
\newtheorem{remark[thm]}{Remark}
\def\Hom{\protect\operatorname{Hom}}
\def\cd{\protect\operatorname{cd}}
\def\Z{{\mathbb Z}}
\def\Q{{\mathbb Q}}
\def\R{{\mathbb R}}
\def\N{{\mathbb N}}
\def\k{{\bold k}}
\def\1{\hbox{\rm\rlap {1}\hskip.03in{\rom I}}}
\def\Bbbone{{\rm1\mathchoice{\kern-0.25em}{\kern-0.25em}
{\kern-0.2em}{\kern-0.2em}I}}
\long\def\forget#1\forgotten{} %
\begin{document}

\title[On dimension of product of groups ]
{On  dimension of product of groups}
\author[A.~Dranishnikov]
{Alexander Dranishnikov}
\address{A. Dranishnikov, Department of Mathematics, University
of Florida, 358 Little Hall, Gainesville, FL 32611-8105, USA}
\email{dranish@math.ufl.edu}

\begin{abstract} 
We prove the product formula 
$$
\cd(\Gamma\times\Gamma)=2\cd\Gamma$$ for cohomological dimension of geometrically finite groups.
\end{abstract}

\maketitle

\section{Introduction}
We recall that the cohomological dimension of a group $\Gamma$ is defined as $$\cd\Gamma=\max\{n\mid H^n(\Gamma,M)\ne 0\}$$ where the maximum is taken over all
$\Z\Gamma$-modules $M$. If there is no such maximum we write $\cd\Gamma=\infty$. 
By the Eilenberg-Ganea theorem~\cite{Br} the cohomological dimension $\cd\Gamma$ coincides with the geometric dimension of $\Gamma$ whenever $\cd\Gamma\ne 2$. The geometric dimension of  $\Gamma$
is the minimal dimension of CW-complexes representing the Eilenberg-MacLane  space $K(\Gamma,1)$.

Bestvina and Mess made a connection between the cohomological dimension of a hyperbolic torsion free group and topological dimension of its boundary~\cite{BM},\cite{Be}:
$$
\cd\Gamma=\dim\partial\Gamma+1.
$$
In the 30s Pontryagin constructed compact subsets $X_p\subset\R^5$ for all prime $p$, now  called Pontryagin surfaces, such that $\dim(X_p\times X_q)=3$ whenever $p\ne q$
and $\dim(X_p\times X_p)=4$ for all $p$. 
Then in the late 40s Boltyanskii constructed a 2-dimensional compact subset $B\subset\R^5$ with $\dim(B\times B)=3$. In~\cite{Dr1} the right-angled Coxeter groups with boundaries
Pontryagin surfaces $X_p$ were  constructed  for all
$p$. This brought an example of a family of hyperbolic groups $\Gamma_p$ with $\cd\Gamma_p=3$ for all $p$ and $\cd(\Gamma_p\times\Gamma_q)=5$ for $p\ne q$~\cite{Dr2}.
Also in~\cite{Dr1} it was proven that Boltyanskii compactum cannot be a boundary of a Coxeter group. This result left some hope for
the logarithmic law $$\cd(\Gamma\times\Gamma)=2\cd\Gamma$$ for all geometrically finite groups. The above equality is the main result of this paper.
We recall that a  group $\Gamma$ is called geometrically finite if
there is a finite Eilenberg-MacLane  complex $K(\Gamma,1)$.
Generaly, for groups which are not geometrically finite the above equality does not hold: For example, for  the group of rationals $\cd\Q=2$ and $\cd(\Q\times\Q)=3$.

The author is thankful to Alex Margolis for valuable remarks.

\section{Dimension of the square of a group}

\subsection{Cohomological dimension of group with respect to a ring} Let $R$ be a commutative ring with unit and let $\Gamma$ be a group. By $R\Gamma$ we denote the group ring,
Recall that $R\Gamma$ is the ring of all functions $f:\Gamma\to R$ with finite support and the convolution as the product.
The cohomological dimension of a group $\Gamma$ with respect to the ring
$R$ is defined as follows~\cite{Br},\cite{Bi},
 $$\cd_R\Gamma=\max_M\{n\mid H^n(\Gamma,M)\ne 0\}$$ where the maximum is taken over all $R\Gamma$-modules. In the case  $R=\Z$ we use the notation
$\cd\Gamma=\cd_\Z\Gamma$
and call the number $\cd\Gamma$ the {\em  cohmological dimension} of $\Gamma$.

For a $\Gamma$-module $M$ we use notations $H^*(\Gamma, M)$ for cohomology groups of a discrete group $\Gamma$  and $H^*(X;M)$ for 
cohomology groups of a topological space $X$ with the fundamental group $\pi_1(X)=\Gamma$. Note that a $\Gamma$-module defines a locally trivial sheaf $\mathcal M$ on $X$ and
$H^*(X;M)$ can be treated as cohomology with coefficients in the sheaf~\cite{Bre}, $H^*(X;M)=H^*(X;\mathcal M)$. Note that $H^*(\Gamma,M)=H^*(B\Gamma;M)$.

For abelian groups $A$ and $B$ we are using the notation $A\ast B=Tor(A,B)$. A group $\Gamma$ is a group of finite type if there is a classifying CW complex $B\Gamma$ with finite skeletons $B\Gamma^{(n)}$ for all $n$.

We recall the Cohomology Universal Coefficient Formula for twisted coefficients

{\bf UCF:}
{\em For a group $\Gamma$ of finite type, a $\Z\Gamma$-module $M$, and an abelian group $R$ there the Universal Coefficient Formula}~\cite{Bre}:
$$
H^n(\Gamma,M\otimes R)=H^n(\Gamma,M)\otimes  R\ \bigoplus\  H^{n+1}(\Gamma,M)\ast R.
$$
Note that $R\Gamma=\Z\Gamma\otimes R$ for any commutative ring $R$. Then the UCF turns into
$$
H^n(\Gamma,R\Gamma)=H^n(\Gamma,\Z\Gamma)\otimes R\ \bigoplus\ H^{n+1}(\Gamma,\Z\Gamma)\ast R.
$$
We recall the following facts about cohomological dimension of groups~\cite{Bi},~\cite{Br}: 

(1) $\cd_R\Gamma$ equals the minimal length of projective resolution of $R$ as a trivial $R\Gamma$-module;

(2) For geometrically finite groups $\cd_R\Gamma=\max\{n\mid H^n(\Gamma,R\Gamma)\ne 0\}$.

\begin{prop}\label{ucf}
For any group $\Gamma$ of finite type and any ring $R$, $\cd_R\Gamma\le\cd\Gamma$.
\end{prop}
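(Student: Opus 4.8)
The plan is to use fact (1), namely that $\cd_R\Gamma$ is the length of a shortest projective resolution of the trivial module $R$ over $R\Gamma$. I would start from a projective $\Z\Gamma$-resolution of $\Z$ of length $n=\cd\Gamma$ (assuming $\cd\Gamma<\infty$, the inequality being vacuous otherwise), say $0\to P_n\to\cdots\to P_0\to\Z\to0$ with each $P_i$ projective over $\Z\Gamma$, and manufacture from it a projective $R\Gamma$-resolution of $R$ of length at most $n$ by applying $-\otimes_\Z R$. Since $\Z\Gamma\otimes_\Z R=R\Gamma$ (as noted in the excerpt) and $\Z\otimes_\Z R=R$, the complex $P_\bullet\otimes_\Z R$ has the right ends; moreover each $P_i\otimes_\Z R$ is $R\Gamma$-projective, because $P_i$ is a direct summand of a free $\Z\Gamma$-module $F$, whence $P_i\otimes_\Z R$ is a direct summand of the free $R\Gamma$-module $F\otimes_\Z R$.

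The one genuine issue, and the step I expect to be the main obstacle, is that $-\otimes_\Z R$ is only right exact, so a priori tensoring may destroy the exactness of $P_\bullet$. I would resolve this by exploiting that every $\Z\Gamma$-projective module is free as an abelian group (it is a direct summand of a free $\Z\Gamma$-module, hence a summand of a free abelian group, hence $\Z$-projective and so $\Z$-free). Consequently every syzygy $K_i:=\Ker(P_i\to P_{i-1})$, being a subgroup of the free abelian group $P_i$, is itself free abelian and therefore $\Z$-flat, so that $K_i\ast R=0$. Breaking $P_\bullet$ into the short exact sequences $0\to K_i\to P_i\to K_{i-1}\to0$ (with $K_{-1}=\Z$) and applying $-\otimes_\Z R$, the vanishing of the Tor terms $K_{i-1}\ast R$ keeps each of these short exact; splicing them back together shows $P_\bullet\otimes_\Z R$ is exact.

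Putting these together, $0\to P_n\otimes_\Z R\to\cdots\to P_0\otimes_\Z R\to R\to0$ is a projective resolution of the trivial $R\Gamma$-module $R$ of length $\le n$, so fact (1) yields $\cd_R\Gamma\le n=\cd\Gamma$. I would remark that finite type is not actually needed for this argument, only $\cd\Gamma<\infty$; alternatively, one can bypass resolutions entirely by observing that for an $R\Gamma$-module $M$ the group cohomology $H^*(\Gamma,M)$ is the same whether $M$ is regarded over $R\Gamma$ or, by restriction of scalars along $\Z\Gamma\to R\Gamma$, over $\Z\Gamma$, so that the maximum defining $\cd_R\Gamma$ is taken over a subfamily of the modules defining $\cd\Gamma$.
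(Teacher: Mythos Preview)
Your argument is correct. Both the main route via tensoring a projective $\Z\Gamma$-resolution with $R$ and the alternative via restriction of scalars along $\Z\Gamma\to R\Gamma$ are standard and valid; the key observation that projective $\Z\Gamma$-modules (and hence all syzygies) are free abelian, so that $-\otimes_\Z R$ preserves exactness, is exactly what is needed.

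The paper, however, argues differently: it applies the Universal Coefficient Formula with $M=\Z\Gamma$, so that $M\otimes R=R\Gamma$, to deduce that $H^n(\Gamma,R\Gamma)\ne 0$ forces $H^n(\Gamma,\Z\Gamma)\ne 0$ or $H^{n+1}(\Gamma,\Z\Gamma)\ne 0$, whence $\cd\Gamma\ge n$. This is a cohomological computation with the specific coefficient module $R\Gamma$, implicitly leaning on fact~(2) (that $\cd_R\Gamma$ is detected by $R\Gamma$-coefficients), whereas you use fact~(1) directly. Your approach is more self-contained and, as you note, does not actually require the finite-type hypothesis; the paper's proof is shorter once the UCF is in hand, but as written it only literally bounds $\sup\{n:H^n(\Gamma,R\Gamma)\ne 0\}$ rather than $\cd_R\Gamma$ itself, so it tacitly needs the stronger hypothesis behind fact~(2). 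Your restriction-of-scalars remark is in fact the cleanest way to see the inequality in full generality.
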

\begin{proof}
We may assume that $\cd\Gamma<\infty$.
By the UCF if $H^n(\Gamma,R\Gamma)\ne 0$, then either $H^n(\Gamma,\Z\Gamma)\ne 0$ or $H^{n+1}(\Gamma,\Z\Gamma)\ne 0$.
In either case $\cd(\Gamma)\ge n$.
\end{proof}

The main techical result of the paper is the following theorem which will be proven at the end of the section.

\begin{thm}\label{field}
For any geometrically finite group $\Gamma$ there is a field $\k$ such that $\cd\Gamma=\cd_\k\Gamma$.
\end{thm}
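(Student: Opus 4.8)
The plan is to reduce the whole statement to a purely algebraic property of the single abelian group $A=H^n(\Gamma,\Z\Gamma)$, where $n=\cd\Gamma$. Since $\Gamma$ is geometrically finite, fact (2) applied to $R=\Z$ gives $\cd_\Z\Gamma=\max\{m\mid H^m(\Gamma,\Z\Gamma)\ne 0\}=n$, so $A\ne 0$ and $H^m(\Gamma,\Z\Gamma)=0$ for all $m>n$; in particular $H^{n+1}(\Gamma,\Z\Gamma)=0$. For an arbitrary field $\k$ the UCF then collapses, because the Tor term $H^{n+1}(\Gamma,\Z\Gamma)\ast\k$ vanishes, to
$$H^n(\Gamma,\k\Gamma)=A\otimes\k.$$
By \propref{ucf} we already know $\cd_\k\Gamma\le n$, and by fact (2) applied to $\k$ the equality $\cd_\k\Gamma=n$ holds precisely when $H^n(\Gamma,\k\Gamma)=A\otimes\k\ne 0$. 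Thus the theorem is equivalent to producing a single field $\k$ with $A\otimes\k\ne 0$.

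Now $A\otimes\Q\ne 0$ exactly when $A$ is not torsion, and $A\otimes(\Z/p)=A/pA\ne 0$ exactly when $A$ is not $p$-divisible. Hence a field detecting $n$ exists unless $A$ is a nonzero \emph{divisible torsion} group, and the entire theorem comes down to excluding this one case. The decisive structural input, which I would isolate as a lemma, is that $A$ is finitely generated not merely as an abelian group but as a $\Z\Gamma$-module. This is where geometric finiteness enters: as $\Gamma$ is of type $FP$ with $\cd\Gamma=n$, one may choose a length-$n$ resolution $0\to P_n\to\cdots\to P_0\to\Z\to 0$ by finitely generated projective $\Z\Gamma$-modules, and then $A$ is the cokernel $\coker\bigl(\Hom_{\Z\Gamma}(P_{n-1},\Z\Gamma)\to\Hom_{\Z\Gamma}(P_n,\Z\Gamma)\bigr)$, a quotient of the finitely generated right $\Z\Gamma$-module $\Hom_{\Z\Gamma}(P_n,\Z\Gamma)$. (It is important that the resolution has length exactly $n$ so that $A$ appears as a quotient, since $\Z\Gamma$ need not be Noetherian and submodules of finitely generated modules may fail to be finitely generated.)

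It then remains to prove the lemma that a nonzero finitely generated $\Z\Gamma$-module $A$ is never divisible torsion, and I would argue by passing to a simple quotient. Being finitely generated, $A$ has a maximal proper submodule $N$, so $S=A/N$ is a nonzero simple $\Z\Gamma$-module which inherits divisibility and torsion from $A$ (both properties pass to quotients). By Schur's lemma $D=\Hom_{\Z\Gamma}(S,S)$ is a division ring, and since $\Z$ is central in $\Z\Gamma$ the multiplication map $\Z\to D$ is a ring homomorphism into a domain, so its kernel $\{m\in\Z\mid mS=0\}$ is a prime ideal. If this kernel is $(p)$ then $pS=0$, so the nonzero group $S$ is not $p$-divisible; if the kernel is $(0)$ then every nonzero integer acts invertibly on $S$, so $S$ is torsion-free. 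In either case $S$ is not a nonzero divisible torsion group, contradicting what it inherited from $A$. Therefore $A$ itself is not divisible torsion.

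The main obstacle is exactly this last lemma. For a general abelian group the condition $A\otimes\k=0$ for every field $\k$ is genuinely possible — any Prüfer group, or a sum of such, has this property — so the proof cannot be formal and must exploit the $\Z\Gamma$-module finiteness in an essential way; the cleanest route I see is the Schur-lemma analysis of a simple subquotient above. Once the lemma is established, the conclusion is immediate: if $A$ is not torsion take $\k=\Q$, and otherwise $A$ is not $p$-divisible for some prime $p$, so $A\otimes(\Z/p)\ne 0$ and one takes $\k=\Z/p$. In both cases $\cd_\k\Gamma=n=\cd\Gamma$, as required.
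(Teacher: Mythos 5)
Your proof is correct, and while it shares the paper's overall skeleton, it replaces the paper's key lemma with a genuinely different argument. Both proofs reduce, via the UCF collapse $H^n(\Gamma,\k\Gamma)=H^n(\Gamma,\Z\Gamma)\otimes\k$ and fact (2), to showing that the top cohomology cannot be a nonzero divisible torsion group, and both exploit the same structural input from geometric finiteness, namely that this top cohomology is a finitely generated module over the group ring (a cokernel, not merely a subquotient --- a point you rightly flag, since $\Z\Gamma$ need not be Noetherian). The paper then proceeds in two stages: \propref{prime} first selects a prime $p$ with $\cd_{\Z_{(p)}}\Gamma=\cd\Gamma$, invoking Fuchs's theorem that a torsion abelian group has a $\Z_{p^k}$ or $\Z_{p^\infty}$ direct summand, and \propref{proper} then shows by induction on the number of generators (a diagram chase over $R=\Z_{(p)}$) that a finitely generated quotient $M$ of $\oplus^m R\Gamma$ with $M\otimes\Q=0$ satisfies $p^rM=0$, which rules out divisibility. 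You instead pass to a simple quotient $S$ of the finitely generated module and apply Schur's lemma: the characteristic of the division ring $\Hom_{\Z\Gamma}(S,S)$ forces $S$ to be either annihilated by a prime or torsion-free, so $S$ --- and hence $A$ --- cannot be divisible torsion. Your route is shorter, needs no localization or structure theory of abelian groups, and finds the prime a posteriori rather than in advance; the paper's \propref{proper} is more laborious but yields the stronger bounded-exponent conclusion $p^nM=0$, which the paper reuses in Section 4 to show that finitely generated torsion $\Z\Gamma$-modules have bounded $p$-primary parts.
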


{\bf The Kunneth Formula:}~\cite{Bre} {\em For  groups $\Gamma$, $\Gamma'$ of finite type, a $\Z\Gamma$-module $M$, and  a $\Z\Gamma'$ module $M'$ with $M\ast M'=0$ as abelian groups,
there is the equality}
$$
H^n(\Gamma\times\Gamma',M\otimes M')=\bigoplus_k H^{k}(\Gamma,M)\otimes H^{n-k}(\Gamma',M')\oplus \bigoplus_k H^{k+1}(\Gamma,M)\ast H^{n-k}(\Gamma',M').
$$

Usually the Kunneth Formula as well as the Universal Coefficient Formula  are stated as short exact sequences. This  happens because the splitting of those short exact sequences is not natural. Since the naturality is not used in this paper we state both as formulas.

There is a similar formula for $R\Gamma$ and $R\Gamma'$ modules $M$ and $M'$ for any principal ideal domain $R$
in which $\otimes$ and $\ast $ are taken over $R$. When $R=k$ is a field, the Kunneth Formula turns into the following

$$
H^n(\Gamma\times\Gamma',M\otimes_k M')=\bigoplus_{k=0}^n H^{k}(\Gamma,M)\otimes_k H^{n-k}(\Gamma',M').
$$

\begin{prop}\label{log}
For any geometrically finite group $\Gamma$ and any field $\k$,
$$
\cd_\k(\Gamma\times\Gamma)=2\cd_\k\Gamma.$$
\end{prop}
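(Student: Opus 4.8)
The plan is to reduce everything to a single application of the field version of the Künneth Formula, using the coefficient module $\k(\Gamma\times\Gamma)$ itself. First I would note that $\Gamma\times\Gamma$ is again geometrically finite, since $K(\Gamma\times\Gamma,1)=K(\Gamma,1)\times K(\Gamma,1)$ is a finite complex. Therefore fact (2) applies and gives
$$\cd_\k(\Gamma\times\Gamma)=\max\{n\mid H^n(\Gamma\times\Gamma,\k(\Gamma\times\Gamma))\ne 0\}.$$
The key structural observation is that $\k(\Gamma\times\Gamma)=\k\Gamma\otimes_\k\k\Gamma$ as $\k(\Gamma\times\Gamma)$-modules, so the coefficient module is itself a tensor product and the field Künneth Formula applies verbatim with $\Gamma'=\Gamma$ and $M=M'=\k\Gamma$:
$$H^n(\Gamma\times\Gamma,\k\Gamma\otimes_\k\k\Gamma)=\bigoplus_{i=0}^n H^i(\Gamma,\k\Gamma)\otimes_\k H^{n-i}(\Gamma,\k\Gamma).$$

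Writing $d=\cd_\k\Gamma$, fact (2) applied to $\Gamma$ tells us that $H^i(\Gamma,\k\Gamma)=0$ for $i>d$ while $H^d(\Gamma,\k\Gamma)\ne 0$. For the lower bound I would set $n=2d$: in the sum above a summand can survive only when both $i\le d$ and $2d-i\le d$, forcing $i=d$, so the whole right-hand side collapses to $H^d(\Gamma,\k\Gamma)\otimes_\k H^d(\Gamma,\k\Gamma)$. Because $\k$ is a field, the tensor product over $\k$ of a nonzero vector space with itself is nonzero, whence $H^{2d}(\Gamma\times\Gamma,\k(\Gamma\times\Gamma))\ne 0$ and $\cd_\k(\Gamma\times\Gamma)\ge 2d$. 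This is exactly the step where working over a field is essential: over $\Z$ one could have, for example, $\Z/p\ast\Z/q=0$, which is what underlies the Pontryagin-type drop in dimension for $p\ne q$.

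For the upper bound I would take any $n>2d$ and observe that every index satisfies $i>d$ or $n-i>d$, since $i+(n-i)=n>2d$; by the vanishing just recalled, each summand $H^i(\Gamma,\k\Gamma)\otimes_\k H^{n-i}(\Gamma,\k\Gamma)$ is then zero, so $H^n(\Gamma\times\Gamma,\k(\Gamma\times\Gamma))=0$ and hence $\cd_\k(\Gamma\times\Gamma)\le 2d$. Combining the two inequalities yields $\cd_\k(\Gamma\times\Gamma)=2d=2\cd_\k\Gamma$. I expect the only genuine points requiring care to be the identification $\k(\Gamma\times\Gamma)\cong\k\Gamma\otimes_\k\k\Gamma$ as modules, so that Künneth is applicable to the regular coefficients, together with the verification that fact (2) is available for the product group; the remaining cohomological bookkeeping is routine.
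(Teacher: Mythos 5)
Your proposal is correct and follows essentially the same route as the paper: a single application of the field K\"unneth Formula to the coefficient module $\k\Gamma\otimes_\k\k\Gamma\cong\k(\Gamma\times\Gamma)$, with the surviving term at degree $2\cd_\k\Gamma$ being $H^d(\Gamma,\k\Gamma)\otimes_\k H^d(\Gamma,\k\Gamma)\ne 0$. The only difference is that you spell out the upper bound and the identification of the coefficient module explicitly, whereas the paper records only the nonvanishing in degree $2\cd_\k\Gamma$ and treats the rest as standard.
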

\begin{proof}
Let $\cd_\k\Gamma=n$. Then the vector space $H^n(\Gamma,\k\Gamma)\ne 0$ is nonzero. By the Kunneth Formula over a field,
$$H^{2n}(\Gamma\times\Gamma,\k\Gamma\otimes_\k\k\Gamma)=H^n(\Gamma,\k\Gamma)\otimes_\k H^n(\Gamma,\k\Gamma)\ne 0.$$
\end{proof}
Here is our main result.
\begin{thm}\label{main}
For any geometrically finite group $\Gamma$,
$$
\cd(\Gamma\times\Gamma)=2\cd\Gamma.$$
\end{thm}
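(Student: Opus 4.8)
The plan is to prove the two inequalities $\cd(\Gamma\times\Gamma)\le 2\cd\Gamma$ and $\cd(\Gamma\times\Gamma)\ge 2\cd\Gamma$ separately, and to observe that the nontrivial content has already been isolated in \theoref{field} and \propref{log}, so that the theorem itself is a formal assembly of the pieces above.

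For the upper bound I would invoke the general subadditivity of cohomological dimension under direct products, $\cd(\Gamma_1\times\Gamma_2)\le\cd\Gamma_1+\cd\Gamma_2$. This is obtained by tensoring resolutions: if $P_\bullet\to\Z$ and $Q_\bullet\to\Z$ are projective $\Z\Gamma$-resolutions of the trivial module of length $n=\cd\Gamma$, then, since projective $\Z\Gamma$-modules are free over $\Z$ and hence $\Z$-flat, the complex $P_\bullet\otimes_\Z Q_\bullet$ is a resolution of $\Z\otimes_\Z\Z=\Z$ by projective $\Z[\Gamma\times\Gamma]$-modules (using $\Z[\Gamma\times\Gamma]=\Z\Gamma\otimes_\Z\Z\Gamma$, and that a tensor product of projectives over a tensor product of rings is projective) of length $2n$. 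By fact (1) this gives $\cd(\Gamma\times\Gamma)\le 2n$.

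For the lower bound I would apply \theoref{field} to obtain a field $\k$ with $\cd_\k\Gamma=\cd\Gamma=n$. Since $\Gamma\times\Gamma$ is again geometrically finite (a product of finite complexes $K(\Gamma,1)$ is a finite $K(\Gamma\times\Gamma,1)$), \propref{log} yields $\cd_\k(\Gamma\times\Gamma)=2\cd_\k\Gamma=2n$, while \propref{ucf} applied to $\Gamma\times\Gamma$ gives $\cd_\k(\Gamma\times\Gamma)\le\cd(\Gamma\times\Gamma)$. Hence $\cd(\Gamma\times\Gamma)\ge 2n$. Combining the two bounds yields $\cd(\Gamma\times\Gamma)=2n=2\cd\Gamma$.

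The entire difficulty is thus displaced onto \theoref{field}. The point is that over a field the Künneth formula carries no $\ast$-term, so the class in $H^n(\Gamma,\k\Gamma)\otimes_\k H^n(\Gamma,\k\Gamma)$ survives and \propref{log} is immediate; over $\Z$, by contrast, the group $H^{2n}(\Gamma\times\Gamma,\Z[\Gamma\times\Gamma])$ could in principle be annihilated by the $\ast$ cross terms appearing in the integral Künneth formula. Choosing a field $\k$ that \emph{detects} $\cd\Gamma$ and then transporting the resulting equality back to $\Z$ through \propref{ucf} is exactly what circumvents this cancellation. I therefore expect the genuine obstacle in the whole program to be the production of such a field, that is, the proof of \theoref{field}, rather than the formal combination carried out here.
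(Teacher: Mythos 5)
Your proposal is correct and follows essentially the same route as the paper: the lower bound via \theoref{field}, \propref{log} and \propref{ucf} (applied to $\Gamma\times\Gamma$) is exactly the paper's argument, and the upper bound by tensoring projective resolutions is the standard justification of what the paper dismisses as ``an obvious inequality.'' Your closing remark correctly identifies \theoref{field} as the locus of the real difficulty.
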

\begin{proof}
Let a field $\k$ be as in Theorem~\ref{field}. Then by Proposition~\ref{log} and Proposition~\ref{ucf} $$2\cd\Gamma=2\cd_\k\Gamma=\cd_\k(\Gamma\times\Gamma)\le\cd(\Gamma\times\Gamma).$$
The result follows in view of an obvious inequality $2\cd\Gamma\ge\cd(\Gamma\times\Gamma)$.
\end{proof}
\begin{cor}
For any geometrically finite group $\Gamma$,
$$
\cd(\Gamma^n)=n\cd\Gamma.$$
\end{cor}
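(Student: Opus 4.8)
The plan is to mirror the proof of \theoref{main}, replacing the two-fold product by the $n$-fold product and iterating the Künneth formula over a field. First I would record that $\Gamma^n$ is itself geometrically finite: the $n$-fold product of a finite $K(\Gamma,1)$ is a finite $K(\Gamma^n,1)$. Hence $\Gamma^n$ is of finite type, so both \propref{ucf} and the Künneth formula apply to it, and the characterization $\cd_\k(\Gamma^n)=\max\{m\mid H^m(\Gamma^n,\k[\Gamma^n])\ne 0\}$ from fact (2) is available.

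Write $d=\cd\Gamma$ and, using \theoref{field}, fix a field $\k$ with $\cd_\k\Gamma=d$. By fact (2) this means $H^d(\Gamma,\k\Gamma)\ne 0$ while $H^m(\Gamma,\k\Gamma)=0$ for $m>d$. Iterating the Künneth formula over the field $\k$ — using the identification $\k\Gamma\otimes_\k\cdots\otimes_\k\k\Gamma\cong\k[\Gamma^n]$ of $\k[\Gamma^n]$-modules — the only summand surviving in top degree $nd$ is the one in which every factor contributes its own top degree, so
$$
H^{nd}(\Gamma^n,\k[\Gamma^n])=\bigotimes_{i=1}^n H^d(\Gamma,\k\Gamma).
$$
A tensor product over a field of finitely many nonzero vector spaces is nonzero, so the right-hand side does not vanish, giving $\cd_\k(\Gamma^n)\ge nd$.

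For the upper bound I would invoke the subadditivity of cohomological dimension under products, $\cd(G\times H)\le\cd G+\cd H$, which is the $n=2$ case of the ``obvious inequality'' used in \theoref{main} and which follows from fact (1): the tensor product over $\Z$ of a projective $\Z G$-resolution of $\Z$ of length $\cd G$ with a projective $\Z H$-resolution of $\Z$ of length $\cd H$ is a projective $\Z[G\times H]$-resolution of $\Z$ of length $\cd G+\cd H$, the product complex being exact because the modules resolved are copies of $\Z$, so all Künneth $\ast$-terms vanish. An easy induction then yields $\cd(\Gamma^n)\le n\cd\Gamma=nd$.

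Assembling the chain, together with \propref{ucf} applied to the group $\Gamma^n$,
$$
nd\le\cd_\k(\Gamma^n)\le\cd(\Gamma^n)\le nd,
$$
forces equality throughout, so $\cd(\Gamma^n)=nd=n\cd\Gamma$. I do not expect a genuine obstacle: the only points needing care are the identification $\k\Gamma^{\otimes_\k n}\cong\k[\Gamma^n]$ and the verification that tensoring projective resolutions over $\Z$ preserves exactness. If anything is delicate it is the upper bound, since one must check that no $\ast$-terms survive in the product resolution; but this is automatic because the trivial module $\Z$ is being resolved.
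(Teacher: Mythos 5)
Your proof is correct, but it takes a different route from the paper's. You re-enter the machinery of \theoref{main}: fix the field $\k$ from \theoref{field}, iterate the K\"unneth formula over $\k$ to isolate the single surviving summand $\bigotimes_{i=1}^n H^d(\Gamma,\k\Gamma)\ne 0$ in degree $nd$, and then pass back to $\Z$ via \propref{ucf} applied to $\Gamma^n$. The paper instead treats \theoref{main} as a black box: since $\Gamma^{2^k}$ is again geometrically finite, induction on $k$ gives $\cd(\Gamma^{2^k})=2^k\cd\Gamma$, and for $n<2^k$ the subadditivity chain $2^k\cd\Gamma=\cd(\Gamma^{2^k})\le\cd(\Gamma^n)+(2^k-n)\cd\Gamma$ squeezes out $\cd(\Gamma^n)\ge n\cd\Gamma$. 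The paper's doubling-and-squeezing argument is purely formal --- it would work for any invariant that is subadditive under products and satisfies the exact doubling law on a class closed under products --- and avoids restating the K\"unneth formula for $n$ factors. Your direct argument is slightly longer but more transparent and gives a bit more: a single field $\k$ witnesses $\cd_\k(\Gamma^n)=n\cd\Gamma$ for all $n$ simultaneously. Both proofs rely on the same two facts you correctly flag: that $\Gamma^n$ is again geometrically finite, and the subadditivity $\cd(G\times H)\le\cd G+\cd H$, for which your justification via tensoring projective resolutions is the standard one.
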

\begin{proof}
In view of the obvious inequality $\cd(\Gamma^n)\le n\cd\Gamma$ it suffices to show the inequality $\cd(\Gamma^n)\ge n\cd\Gamma$.

By induction on $k$ we obtain the equality $\cd(\Gamma^{2^k})=2^k\cd\Gamma$. For $n<2^k$ the inequality
$$
2^k\cd\Gamma=\cd(\Gamma^{2^k})\le\cd(\Gamma^n)+\cd(\Gamma^{2^k-n})\le \cd(\Gamma^n)+(2^k-n)\cd\Gamma
$$
implies that $n\cd\Gamma\le\cd(\Gamma^n)$.
\end{proof}
\begin{cor}
Boltyaskii's compactum cannot be a $Z$-boundary of a geometrically finite group.
\end{cor}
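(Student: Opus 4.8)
The plan is to derive a contradiction from the two classical features of Boltyanskii's compactum $B$ --- that $\dim B=2$ while $\dim(B\times B)=3$ --- by routing the group structure through the Bestvina--Mess dimension formula. Suppose, for contradiction, that $B$ is the $Z$-boundary of a geometrically finite group $\Gamma$. Since $B$ is a finite-dimensional compactum, Alexandroff's theorem gives $\dim_\Z B=\dim B=2$.

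First I would record the coefficient-sensitive Bestvina--Mess formula for $Z$-boundaries: for every coefficient ring $R$ one has $\dim_R B=\cd_R\Gamma-1$. This is the $Z$-boundary counterpart of the relation for Gromov boundaries quoted in the introduction; it follows by identifying the \v{C}ech cohomology of $B$ with coefficients in $R$ with the shifted group cohomology $H^{*+1}(\Gamma,R\Gamma)$ and then invoking fact~(2). Taking $R=\Z$ and using $\dim_\Z B=2$ gives $\cd\Gamma=3$.

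Next I would apply Theorem~\ref{field} to obtain a field $\k$ realizing the integral cohomological dimension, $\cd_\k\Gamma=\cd\Gamma=3$. Re-entering the Bestvina--Mess formula with $R=\k$ then yields $\dim_\k B=\cd_\k\Gamma-1=2$. Now the product theorem for cohomological dimension over a field --- the topological counterpart of the K\"unneth formula stated above, in which there are no $\ast$-terms --- gives $\dim_\k(B\times B)=\dim_\k B+\dim_\k B=4$. Since cohomological dimension is bounded above by covering dimension, this forces $4=\dim_\k(B\times B)\le\dim(B\times B)=3$, which is absurd. Hence no such $\Gamma$ exists.

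The conceptual heart of the argument, and the step I expect to be the main obstacle, is the passage through a single field $\k$. The defining pathology of Boltyanskii's compactum is precisely that no field witnesses its integral dimension: indeed $\dim_\k(B\times B)=2\dim_\k B\le\dim(B\times B)=3$ forces $\dim_\k B\le 1$ for every field $\k$, whereas $\dim_\Z B=2$. Theorem~\ref{field}, the algebraic essence of the main result, forbids this behaviour for a $Z$-boundary, for it produces one field that simultaneously realizes $\cd\Gamma$ and, through Bestvina--Mess, the full integral dimension of $B$. The delicate point to be justified with care is therefore the two-sided formula $\dim_R B=\cd_R\Gamma-1$ for $Z$-boundaries over an arbitrary coefficient ring $R$, extending the hyperbolic, Gromov-boundary case recalled in the introduction.
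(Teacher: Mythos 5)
Your proof is correct, but it takes a genuinely different route from the paper's. The paper stays on the group side: it uses the fact that the join $B\ast B$ is a $Z$-boundary of $\Gamma\times\Gamma$, applies Bestvina's \emph{integral} formula $\dim_\Z\partial G+1=\cd G$ to both $\Gamma$ and $\Gamma\times\Gamma$, and feeds the result into the already-proved product formula $\cd(\Gamma\times\Gamma)=2\cd\Gamma$ of Theorem~\ref{main} to reach the contradiction $5=6$. You instead transfer everything to the compactum: you bypass the join and the product group altogether, use only Theorem~\ref{field} (not Theorem~\ref{main}), pull the resulting field $\k$ back through the $R$-coefficient version of Bestvina's formula to get $\dim_\k B=\cd_\k\Gamma-1=2$, and contradict the classical equality $\dim_\k(X\times Y)=\dim_\k X+\dim_\k Y$ for compacta over a field together with $\dim(B\times B)=3$. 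What your route buys is conceptual clarity: it makes explicit that Theorem~\ref{field} would force some field to witness $\dim_\Z B=2$, which is precisely the behaviour Boltyanskii's compactum is built to forbid; it also avoids having to know that $Z$-structures pass to products with join of boundaries. What it costs is a stronger external input: you need Bestvina's formula $\dim_R\partial G=\cd_R G-1$ for arbitrary coefficients $R$, not just $R=\Z$. That statement is indeed in~\cite{Be} (it is what allows Bestvina to run Bockstein theory on boundaries), so the step you rightly flag as the delicate one is available, but you should cite it rather than sketch a re-derivation; with that reference supplied, your argument is complete.
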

\begin{proof}
Suppose that  Boltyanskii's compactum $B$ is a $Z$-boundary of a geometrically finite group $\Gamma$. Then the join product $B\ast B$ is a $Z$-boundary of $\Gamma\times\Gamma$.
In view of Bestvina's formula
$$
\dim_\Z\partial G+1=\cd G.
$$
for a group $G$ with a $Z$-boundary $\partial G$~\cite{Be} we obtain
a contradiction:
$$
5=\dim_\Z(B\ast B)+1=\cd(\Gamma\times\Gamma)=2\cd\Gamma=2(\dim_\Z B+1)=6.
$$
\end{proof}

We use the notations $\Z_p=\Z/p\Z$ for the cyclic group of order $p$, $\Z_{p^\infty}=\lim_{\rightarrow}\{\Z_{p^k}\}$ for the quasi-cyclic group, and $\Z_{(p)}$ for the localization of $\Z$ at $p$.

\begin{prop}\label{prime}
For any geometrically finite group $\Gamma$ there is a prime number $p$ such that $\cd\Gamma=\cd_{\Z_{(p)}}\Gamma$.
\end{prop}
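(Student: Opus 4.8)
The plan is as follows. Set $n=\cd\Gamma$. Because \propref{ucf} already gives $\cd_{\Z_{(p)}}\Gamma\le\cd\Gamma$ for every prime $p$, it suffices to exhibit a single prime $p$ with $\cd_{\Z_{(p)}}\Gamma\ge n$; by fact (2) applied to the ring $R=\Z_{(p)}$ this is the same as finding a $p$ for which $H^n(\Gamma,\Z_{(p)}\Gamma)\ne0$. Since $\Gamma$ is geometrically finite, fact (2) for $R=\Z$ shows that the abelian group $A:=H^n(\Gamma,\Z\Gamma)$ is nonzero, so the entire task reduces to choosing $p$ so that passage from $\Z$ to $\Z_{(p)}$ coefficients preserves nonvanishing in degree $n$.

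First I would produce the prime by a purely algebraic argument. The maximal ideals of $\Z$ are exactly the ideals $(p)$ with $p$ prime, and a module over a commutative ring vanishes if and only if all its localizations at maximal ideals vanish. Applying the contrapositive to the nonzero $\Z$-module $A$ gives a prime $p$ with $A\otimes\Z_{(p)}\ne0$: concretely, choose $0\ne a\in A$, take a maximal ideal $(p)\supseteq\operatorname{Ann}(a)$, and note that $a/1\ne0$ in the localization $A_{(p)}=A\otimes\Z_{(p)}$. With this $p$ fixed, I would transfer the nonvanishing to twisted cohomology via the UCF. Since $\Z_{(p)}\subset\Q$ is torsion-free, the term $B\ast\Z_{(p)}$ vanishes for every abelian group $B$; hence the UCF applied to $M=\Z\Gamma$ and $R=\Z_{(p)}$, together with $\Z_{(p)}\Gamma=\Z\Gamma\otimes\Z_{(p)}$, collapses to $H^n(\Gamma,\Z_{(p)}\Gamma)=A\otimes\Z_{(p)}\ne0$. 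By fact (2) for $R=\Z_{(p)}$ this yields $\cd_{\Z_{(p)}}\Gamma\ge n$, and combined with \propref{ucf} we obtain $\cd_{\Z_{(p)}}\Gamma=\cd\Gamma$.

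I do not expect a serious obstacle, since every cohomological ingredient is already available in the excerpt and the only external input is the elementary localization principle. The one point deserving care is the vanishing of the Tor summand of the UCF: this is exactly what makes $H^n(\Gamma,\Z_{(p)}\Gamma)$ compute the localization $A_{(p)}$ on the nose, and it is the reason for localizing at a prime — where $\Z_{(p)}$ is flat — rather than reducing modulo one. Thus the nonvanishing of $A$ at some prime, guaranteed by the localization principle, is precisely what the argument needs.
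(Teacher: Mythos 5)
Your proof is correct, and it reaches the key intermediate claim --- that some prime $p$ satisfies $H^n(\Gamma,\Z\Gamma)\otimes\Z_{(p)}\ne 0$ --- by a genuinely different route than the paper. The paper sets $A=H^n(\Gamma,\Z\Gamma)$ and splits into cases: if $A$ has a nonzero torsion-free quotient then \emph{every} prime works, while if $A$ is all torsion it invokes Fuchs' theorem that a nonzero torsion abelian group has a direct summand isomorphic to $\Z_{p^k}$ or $\Z_{p^\infty}$, and checks that each survives tensoring with $\Z_{(p)}$. You instead use the local--global principle: a nonzero $\Z$-module has a nonzero localization at some maximal ideal, realized concretely by picking $0\ne a\in A$ and a prime $p$ with $(p)\supseteq\operatorname{Ann}(a)$, so that $a/1\ne 0$ in $A_{(p)}=A\otimes\Z_{(p)}$. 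Your version avoids the case split and the external citation to Fuchs, and is the more standard commutative-algebra argument; the paper's version is thematically tied to its later discussion of $\Z_{p^\infty}$ and torsion $\Z\Gamma$-modules but yields nothing extra that is needed here. From that point the two arguments coincide: both feed the nonvanishing into the UCF for $R=\Z_{(p)}$ and close with \propref{ucf}. One small remark: your care about the Tor term vanishing (flatness of $\Z_{(p)}$) is correct but not actually needed for the conclusion, since $A\otimes\Z_{(p)}$ already appears as a direct summand of $H^n(\Gamma,\Z_{(p)}\Gamma)$ in the UCF as stated, so its nonvanishing alone forces $H^n(\Gamma,\Z_{(p)}\Gamma)\ne 0$.
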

\begin{proof}
Let $\cd\Gamma=n$.  Let $T(A)$ denote the torsion subgroup of $A=H^n(\Gamma,\Z\Gamma)$.
If $A/T(A)\ne 0$, then $A/T(A)\otimes\Z_{(p)}\ne 0$ and, hence, $A\otimes\Z_{(p)}\ne 0$  for all $p$. 
By the UCF, $H^n(\Gamma,\Z_{(p)}\Gamma)\ne 0$. Hence $\cd_{\Z_{(p)}}\Gamma\ge n$.
Proposition~\ref{ucf} implies  that $\cd\Gamma=\cd_{\Z_{(p)}}\Gamma$. 

Let $T(A)=A$. Every torsion abelian group $A$ contains either $\Z_{p^k}$ or $\Z_{p^\infty}$ for some $p$ as a direct summand~\cite{Fu}.
Since  $\Z_{p^k}\otimes\Z_{(p)}\ne 0$ as well as $\Z_{p^\infty}\otimes\Z_{(p)}\ne 0$, we obtain $H^n(\Gamma,\Z\Gamma)\otimes \Z_{(p)}\ne 0$ . Hence $H^n(\Gamma,\Z_{(p)}\Gamma)\ne 0$ and therefore, $\cd\Gamma=\cd_{\Z_{(p)}}\Gamma$. 
\end{proof}

\begin{prop}\label{proper}
Suppose that $J\subset\oplus^mR\Gamma$ is a $R\Gamma$-submodule with the quotient module $M=(\oplus^mR\Gamma)/J$
for which $M\otimes_\Z\Q=0$  where $R=\Z_{(p)}$ for some prime $p$. Then $p^nM=0$ for some $n\in\N$.
\end{prop}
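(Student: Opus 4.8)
The plan is to exploit the fact that $M$, being a quotient of the rank-$m$ free module $\oplus^m R\Gamma$, is finitely generated as an $R\Gamma$-module, and to combine this with the observation that over $R=\Z_{(p)}$ a torsion element is annihilated not merely by some integer but by a power of $p$ alone. A uniform bound will then follow by taking a maximum over finitely many generators.

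First I would translate the hypothesis $M\otimes_\Z\Q=0$ into the statement that $M$ is a torsion abelian group, i.e. every $x\in M$ satisfies $Nx=0$ for some positive integer $N$. Writing $N=p^a q$ with $\gcd(p,q)=1$ and recalling that $q$ is a unit in $R=\Z_{(p)}$, the relation $q(p^a x)=Nx=0$ yields $p^a x=0$ after multiplying by $q^{-1}\in R$. Thus each individual element of $M$ is annihilated by some power of $p$.

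The remaining task is to pass from this pointwise power-of-$p$ bound to a uniform one, and this is where finite generation enters. Let $\bar e_1,\dots,\bar e_m$ be the images in $M$ of the standard free generators of $\oplus^m R\Gamma$; these generate $M$ as an $R\Gamma$-module. By the previous paragraph there are exponents $k_i$ with $p^{k_i}\bar e_i=0$; set $n=\max_i k_i$, so that $p^n\bar e_i=0$ for every $i$. For an arbitrary element $x=\sum_i r_i\bar e_i$ with $r_i\in R\Gamma$, I would use that $p^n$ lies in the central subring $R\subset R\Gamma$, hence commutes with each $r_i$, to obtain $p^n x=\sum_i r_i(p^n\bar e_i)=0$. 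Therefore $p^nM=0$, as required.

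The only genuinely delicate point is this last step: finite generation is available over $R\Gamma$ rather than over $R$, since $\Gamma$ may be infinite and $M$ need not be finitely generated as an $R$-module. The centrality of $p^n$ in $R\Gamma$ is precisely what allows the finitely many $R\Gamma$-generators to control the entire module, and I expect this interplay between the (noncommutative) coefficient ring and the scalar nature of $p^n$ to be the crux of the argument.
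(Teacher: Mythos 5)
Your proof is correct. The key point in both arguments is the same: because every prime $q\ne p$ is a unit in $R=\Z_{(p)}$, the hypothesis $M\otimes_\Z\Q=0$ forces each element of $M$ to be annihilated by a power of $p$ alone, and then centrality of $p$ in $R\Gamma$ lets finitely many $R\Gamma$-generators control all of $M$. Where you differ is in the bookkeeping: the paper proves the case $m=1$ exactly as you treat a single generator (the image of $e$ is killed by some $p^r$, hence so is the cyclic module it generates), and then runs an induction on $m$ via the projection $\oplus^{m+1}R\Gamma\to\oplus^m R\Gamma$ and a $3\times 3$ commutative diagram, concluding $p^{r+s}M=0$ from bounds $p^sM'=0$ on the quotient and $p^r$ on the kernel. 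You instead handle all $m$ generators at once and take the maximum of the exponents, which avoids the induction and the diagram chase entirely and even gives a slightly better exponent ($\max_i k_i$ rather than a sum). You also correctly flag the one genuinely delicate point, namely that finite generation is over the noncommutative ring $R\Gamma$ and that it is the centrality of $p^n$ that makes the argument go through; this is implicit in the paper's step ``$p^re\in J$ implies $p^rR\Gamma\subset J$.'' Your version is a clean simplification of the paper's proof.
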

\begin{proof}
We apply induction on $m$. In the case $m=1$ we may assume that $J$
does not contain $e\in\Gamma$. Since $(R\Gamma/J)\otimes\Q=0$ there is $r$ such that $p^re\in J$.
Then $p^rR\Gamma\subset J$. This implies that $p^rM=0$.

Assume that the statement of Proposition holds for $m$. Let $J\subset\oplus^{m+1}R\Gamma$ and $M\otimes\Q=0$ where $M=(\oplus^{m+1}R\Gamma)/J$. 
Let $\pi:\oplus^mR\Gamma\oplus R\Gamma\to\oplus^mR\Gamma$ denote the projection. Let $M'=(\bigoplus_{i=1}^kR\Gamma)/\pi(J)$.
We consider the following commutative diagram 
$$
\begin{CD}
@. 0 @. 0 @. 0 @.  \\
@. @VVV @VVV @VVV @.\\
0 @>>> J\cap R\Gamma @>>> J @>>>\pi(J) @>>>0\\
@. @VVV @VVV @VVV @.\\
0 @>>> R\Gamma @>>> \oplus^{m+1}R\Gamma @>\pi>>\oplus^m R\Gamma @>>> 0\\
@. @VVV @VVV @VVV @.\\
0 @>>> R\Gamma/(J\cap R\Gamma) @>>> M @>\bar\pi>> M' @>>> 0\\
@. @VVV @VVV @VVV @.\\
@. 0 @. 0 @. 0 @.  \\
\end{CD}
$$
with exact columns and rows.

In view of surjectivity of
$
M\otimes\Q\to M'\otimes\Q
$
and the assumption of the Proposition we obtain $M'\otimes\Q=0$.
By induction assumption $p^sM'=0$ for some $s$. 
Tensoring the bottom row with $\Q$  implies that $R\Gamma/(J\cap R\Gamma)\otimes\Q=0$.  By induction assumption
$p^r(R\Gamma/(J\cap R\Gamma))=0$ for some $r$. Then $p^{r+s}M=0$. Indeed, for any $x\in M$ we have
$\bar\pi(p^sx)=p^s\bar\pi(x)=0$ and, hence, $p^sx\in R\Gamma/(J\cap R\Gamma)$. Thus, $p^rp^sx=0$.
\end{proof}

{\em Proof of Theorem~\ref{field}.}
Let $\cd\Gamma=n$. Then $H^n(\Gamma,\Z\Gamma)\ne 0$~\cite{Br}.
Let $p$ be as in Proposition~\ref{prime}. Then $H^n(\Gamma,\Z_{(p)}\Gamma)\ne 0$.
Since $B\Gamma$ is a finite complex, the chain complex for $E\Gamma$ consists of finitely generated free
$\Z\Gamma$ modules:
$$0\to C_n\to C_{n-1}\to\cdots\to C_1\to C_0.$$
Then the cohomology group $H^n(\Gamma,\Z_{(p)}\Gamma)$ is defined by the cochain complex
$$
\begin{CD}
0 @<<< \Hom_{\Z\Gamma}(C_n,\Z_{(p)}\Gamma) @<\delta<<\ Hom_{\Z\Gamma}(C_{n-1},\Z_{(p)}\Gamma) @<<<\cdots.
\end{CD}
$$
Note that $Hom_{\Z\Gamma}(C_n,\Z_{(p)}\Gamma) \cong\oplus^m\Z_{(p)}\Gamma$ for some $m\in\N$. We denote by $F^r=\oplus^r\Z_{(p)}\Gamma$ and
by $J=im(\delta: F^\ell\to F^m)$ the image of $\delta$.
Thus, $F^m/J\ne 0$. If $(F^m/J)\otimes\Q\ne 0$, then $H^n(\Gamma,\Q\Gamma)\ne 0$ and we take $\k=\Q$.
If $(F^m/J)\otimes\Q=0$ we apply Proposition~\ref{proper} to obtain $H^n(\Gamma,\Z_p\Gamma)\ne 0$ for some prime number $p$. Then $\k=\Z_p$.

\section{Cohomological dimension over fields}

We recall the excision theorem for cohomology with coefficients in a sheaf~\cite{Bre}, Proposition~ 12.3.
\begin{thm}
If $A\subset X$ is a closed subset of compact space, then for any sheaf $\mathcal M$ on $X$ there is a natural isomorphism
$$
H^p(X,A;\mathcal M)\cong H^p_c(X\setminus A;\mathcal M).
$$
\end{thm}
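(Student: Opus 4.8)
The plan is to realize the relative cohomology $H^p(X,A;\mathcal M)$ and the compactly supported cohomology $H^p_c(X\setminus A;\mathcal M)$ as the two ends of one and the same long exact sequence, obtained from the extension by zero of the sheaf off the open complement. Write $U=X\setminus A$ for the open complement, and let $j\colon U\hookrightarrow X$ and $i\colon A\hookrightarrow X$ be the open and closed inclusions. Extension by zero and restriction to $A$ produce the short exact sequence of sheaves on $X$
$$
0\to j_!(\mathcal M|_U)\to\mathcal M\to i_*(\mathcal M|_A)\to 0,
$$
where $j_!$ denotes the extension-by-zero functor. The whole argument is obtained by passing to the associated long exact cohomology sequence on the compact space $X$ and matching it against the long exact sequence of the pair $(X,A)$.

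First I would identify the two outer terms. Since $i$ is a closed embedding, $i_*$ is exact and carries resolutions to acyclic ones, so the Leray-type identity $H^p(X;i_*(\mathcal M|_A))\cong H^p(A;\mathcal M|_A)$ holds. The crucial step is the identification of the kernel term: because $X$ is compact,
$$
H^p(X;j_!(\mathcal M|_U))\cong H^p_c(U;\mathcal M|_U).
$$
Here one uses that a global section of $j_!(\mathcal M|_U)$ over $X$ is a section of $\mathcal M$ over $U$ whose support is closed in $X$, hence compact, so that $\Gamma(X,j_!(\mathcal M|_U))=\Gamma_c(U,\mathcal M|_U)$; applying $j_!$ to a c-soft resolution of $\mathcal M|_U$ then computes the compactly supported cohomology of $U$. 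With these identifications, the long exact sequence of the displayed short exact sequence reads
$$
\cdots\to H^p_c(U;\mathcal M)\to H^p(X;\mathcal M)\to H^p(A;\mathcal M)\to H^{p+1}_c(U;\mathcal M)\to\cdots.
$$

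Finally I would compare this with the long exact sequence of the pair,
$$
\cdots\to H^p(X,A;\mathcal M)\to H^p(X;\mathcal M)\to H^p(A;\mathcal M)\to H^{p+1}(X,A;\mathcal M)\to\cdots,
$$
observing that the two sequences share the same middle and right terms and that the restriction maps $H^p(X;\mathcal M)\to H^p(A;\mathcal M)$ coincide. A five-lemma comparison of the resulting ladder yields the natural isomorphism $H^p(X,A;\mathcal M)\cong H^p_c(X\setminus A;\mathcal M)$, the naturality being inherited from the naturality of each map in the comparison. The main obstacle is precisely the kernel identification $H^p(X;j_!(\mathcal M|_U))\cong H^p_c(U;\mathcal M|_U)$: this is the only place where compactness of $X$ is genuinely used, and it requires care in matching the family of compact supports on $U$ with the closed supports on $X$ through $j_!$. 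Indeed, in Bredon's framework the relative group $H^p(X,A;\mathcal M)$ is defined directly as $H^p(X;j_!(\mathcal M|_U))$, in which case the comparison becomes a tautology and the theorem reduces to exactly this identification.
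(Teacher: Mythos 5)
The paper does not prove this statement at all: it is quoted verbatim as Proposition II.12.3 of Bredon's \emph{Sheaf Theory} and used as a black box, so there is no in-paper argument to compare against. Your reconstruction is correct and is in fact essentially Bredon's own proof: the extension-by-zero sequence $0\to j_!(\mathcal M|_U)\to\mathcal M\to i_*(\mathcal M|_A)\to 0$, the identification $\Gamma(X,j_!(\mathcal M|_U))=\Gamma_c(U,\mathcal M|_U)$ forced by compactness of $X$, and the fact that the relative groups for a closed pair are computed by the subsheaf $j_!(\mathcal M|_U)$. Two small points deserve attention. First, the step ``apply $j_!$ to a c-soft resolution'' silently uses that $j_!$ of a c-soft sheaf on the open set $U$ is again c-soft (hence soft, hence acyclic) on the compact space $X$; this is true and standard, but it is the technical heart of the identification $H^p(X;j_!(\mathcal M|_U))\cong H^p_c(U;\mathcal M|_U)$ and should be cited or checked rather than elided. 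Second, your ``five-lemma comparison'' is not valid as literally stated: two long exact sequences sharing the same middle and right terms and the same maps between them do not formally have isomorphic remaining terms unless you exhibit a map of ladders commuting with the connecting homomorphisms. You rescue this yourself in the final sentence --- in Bredon's framework $H^p(X,A;\mathcal M)$ for closed $A$ is computed by the subcomplex $\Gamma(X,\mathcal C^\bullet\otimes j_!(\mathcal M|_U))$, so the required map exists at the cochain level and the five lemma becomes unnecessary --- but the proof should be organized around that observation from the start rather than appended to it.
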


The excision theorem and the long exact sequence of pair  imply the following
\begin{prop}\label{skelet}
For a finite complex $X$ and any local coefficient system, $$H^m(X;\mathcal M)=H^m(X^{(m+1)};\mathcal M)$$
for all $m$.
\end{prop}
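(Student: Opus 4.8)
The plan is to run the long exact cohomology sequence of the pair $(X,X^{(m+1)})$ and to show that the relative groups straddling degree $m$ vanish. Concretely, the sequence
$$
\cdots\to H^m(X,X^{(m+1)};\mathcal M)\to H^m(X;\mathcal M)\to H^m(X^{(m+1)};\mathcal M)\to H^{m+1}(X,X^{(m+1)};\mathcal M)\to\cdots
$$
reduces the claim to proving that $H^m(X,X^{(m+1)};\mathcal M)=0$ and $H^{m+1}(X,X^{(m+1)};\mathcal M)=0$. The first vanishing makes the restriction map injective and the second makes it surjective, so the inclusion $X^{(m+1)}\hookrightarrow X$ would induce the asserted isomorphism.

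To compute these relative groups I would apply the excision theorem to the closed pair $(X,X^{(m+1)})$, which gives
$$
H^p(X,X^{(m+1)};\mathcal M)\cong H^p_c(X\setminus X^{(m+1)};\mathcal M).
$$
The complement $X\setminus X^{(m+1)}$ is the disjoint union of the open cells of $X$ of dimension at least $m+2$. Since compactly supported cohomology converts disjoint unions into direct sums, it suffices to understand $H^p_c$ of a single open $k$-cell with $k\ge m+2$.

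Each open $k$-cell is homeomorphic to $\R^k$, and in particular is contractible, so the restriction of the local system $\mathcal M$ to it is a constant sheaf with some stalk $M_0$. Then $H^p_c(\R^k;M_0)$ is concentrated in the top degree $p=k$, and in particular vanishes for every $p\le k-1$. As each contributing cell has $k\ge m+2$, we obtain $H^p_c(X\setminus X^{(m+1)};\mathcal M)=0$ for all $p\le m+1$, which covers precisely the two degrees $p=m$ and $p=m+1$ needed above.

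I expect the only genuine point requiring care to be the coefficient computation $H^p_c(\R^k;M_0)=0$ for $p<k$: one must check that the local system is trivial on each cell (immediate from contractibility) and that the compactly supported cohomology of Euclidean space with constant coefficients is concentrated in the top dimension, which follows from the computation for a point via the one-point-compactification isomorphism $H^p_c(\R^k;M_0)\cong\widetilde H^p(S^k;M_0)$. Everything else is formal: assembling the vanishing over the disjoint union of cells and feeding it into the long exact sequence of the pair.
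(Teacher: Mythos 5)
Your overall strategy---long exact sequence of the pair, excision to pass to compactly supported cohomology of the complement, then the vanishing of $H^p_c$ of open cells below the top degree---is exactly the route the paper gestures at (its entire ``proof'' is the sentence that the excision theorem and the long exact sequence of the pair imply the proposition). However, there is one concretely false step in your execution: $X\setminus X^{(m+1)}$ is in general \emph{not} the topological disjoint union of the open cells of dimension at least $m+2$. The open cells partition it only set-theoretically; an open $k$-cell is open in $X$ only if no higher-dimensional cell's closure meets it. For instance, take $m=0$ and $X=D^3$ with the CW structure having one $0$-cell, one $2$-cell and one $3$-cell: then $X\setminus X^{(1)}=D^3\setminus\{\mathrm{pt}\}$ is connected, while the disjoint union of the open $2$-cell and the open $3$-cell is not. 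So the identity ``$H^p_c$ of a disjoint union is the direct sum over the pieces'' cannot be invoked for $X\setminus X^{(m+1)}$ as a whole, and your computation of the relative groups does not go through as written.

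The standard repair is to descend one skeleton at a time rather than excising everything above $X^{(m+1)}$ in one step. For each $k$, the complement $X^{(k+1)}\setminus X^{(k)}$ genuinely \emph{is} a topological disjoint union of open $(k+1)$-cells (each is open in $X^{(k+1)}$), so excision plus your computation $H^p_c(\R^{k+1};M_0)=0$ for $p\le k$ gives $H^p\bigl(X^{(k+1)},X^{(k)};\mathcal M\bigr)=0$ for $p\le k$. Feeding this into the long exact sequence of the pair $(X^{(k+1)},X^{(k)})$ yields $H^m(X^{(k+1)};\mathcal M)\cong H^m(X^{(k)};\mathcal M)$ whenever $k\ge m+1$. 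Since $X$ is a finite complex, $X=X^{(N)}$ for some $N$, and chaining these isomorphisms from $k=N-1$ down to $k=m+1$ gives $H^m(X;\mathcal M)\cong H^m(X^{(m+1)};\mathcal M)$. With this modification your argument is complete; the coefficient computations you flag as the delicate points (triviality of the local system on a cell by contractibility, and $H^p_c(\R^k;M_0)\cong\widetilde H^p(S^k;M_0)$) are correct as stated.
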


\begin{thm}
For  geometrically finite groups $\cd_{\Z_{(p)}}\Gamma= \cd_\Q\Gamma $ for all but finitely many prime numbers $p$.
\end{thm}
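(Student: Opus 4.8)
The plan is to compare the two cohomological dimensions directly on the cochain complex that computes group cohomology with coefficients in $R\Gamma$, and to show that the rational vanishing of cohomology in high degrees \emph{spreads out} to $\Z_{(p)}$ for all but finitely many $p$. Since $\Gamma$ is geometrically finite, the cellular chain complex of $E\Gamma$ is a finite complex $0\to C_N\to\cdots\to C_0\to 0$ of finitely generated free $\Z\Gamma$-modules, and its dual $D^{\bullet}=\Hom_{\Z\Gamma}(C_{\bullet},\Z\Gamma)$ is a finite cochain complex of finitely generated free $\Z\Gamma$-modules, $D^k\cong\oplus^{m_k}\Z\Gamma$, with differentials $\delta^k$ given by finite matrices over $\Z\Gamma$. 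For any ring $R$ one has $D^k\otimes_\Z R\cong\Hom_{\Z\Gamma}(C_k,R\Gamma)$, so $H^k(\Gamma,R\Gamma)=H^k(D^{\bullet}\otimes_\Z R)$. Writing $q=\cd_\Q\Gamma$ and using the characterization $\cd_R\Gamma=\max\{k\mid H^k(\Gamma,R\Gamma)\ne 0\}$ for geometrically finite groups, the theorem amounts to two inequalities. The easy one, $\cd_\Q\Gamma\le\cd_{\Z_{(p)}}\Gamma$ for all $p$, follows because $H^q(\Gamma,\Q\Gamma)\ne 0$ forces $H^q(\Gamma,\Z_{(p)}\Gamma)\ne 0$ (apply $-\otimes_{\Z_{(p)}}\Q$); the content is the reverse inequality $\cd_{\Z_{(p)}}\Gamma\le q$ for almost all $p$.

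The heart of the argument is a partial contracting homotopy over $\Q\Gamma$. By the definition of $q$, the complex $D^{\bullet}\otimes\Q$ of free $\Q\Gamma$-modules satisfies $H^k(D^{\bullet}\otimes\Q)=0$ for all $k>q$. I would construct $\Q\Gamma$-homomorphisms $s^k\colon D^k\otimes\Q\to D^{k-1}\otimes\Q$ for $k=q+1,\dots,N$ (with $s^{N+1}=0$) satisfying
\[
\delta^{k-1}s^k+s^{k+1}\delta^k=\id_{D^k\otimes\Q},\qquad k=q+1,\dots,N,
\]
by downward induction on $k$. At the top, surjectivity of $\delta^{N-1}\otimes\Q$ together with projectivity of $D^N\otimes\Q$ yields a splitting $s^N$; at each subsequent step one sets $\phi^k=\id-s^{k+1}\delta^k$, checks that $\delta^k\phi^k=0$ using the previously constructed identity together with $\delta^{k+1}\delta^k=0$, so that $\operatorname{im}\phi^k\subseteq\ker\delta^k=\operatorname{im}\delta^{k-1}$ by exactness in degree $k>q$, and then lifts $\phi^k$ through $\delta^{k-1}$ using freeness of $D^k\otimes\Q$ to obtain $s^k$.

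Finally I would clear denominators. Each $s^k$ is a finite matrix over $\Q\Gamma$, that is, finitely many entries, each a finite $\Q$-linear combination of group elements; since there are finitely many maps $s^{q+1},\dots,s^N$, only finitely many primes occur in the denominators of all the coefficients involved. Let $\Sigma$ be that finite set. For every prime $p\notin\Sigma$ all these coefficients lie in $\Z_{(p)}$, so each $s^k$ restricts to a $\Z_{(p)}\Gamma$-homomorphism $D^k\otimes\Z_{(p)}\to D^{k-1}\otimes\Z_{(p)}$; because $\Z_{(p)}\Gamma$ embeds in $\Q\Gamma$, the homotopy identities displayed above continue to hold over $\Z_{(p)}\Gamma$. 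These identities show that every cocycle of $D^{\bullet}\otimes\Z_{(p)}$ in degree $k>q$ is a coboundary, whence $H^k(\Gamma,\Z_{(p)}\Gamma)=0$ for $k>q$ and $\cd_{\Z_{(p)}}\Gamma\le q=\cd_\Q\Gamma$. Together with the easy inequality this gives $\cd_{\Z_{(p)}}\Gamma=\cd_\Q\Gamma$ for all $p\notin\Sigma$.

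The main obstacle is the construction and verification of the contracting homotopy in degrees $>q$: one must check that the inductive lifting is unobstructed, which is exactly where the vanishing $H^k(D^{\bullet}\otimes\Q)=0$ and the freeness of the $D^k$ are used. Once the homotopy exists, the passage to $\Z_{(p)}$ is a routine finiteness argument, and it is precisely this denominator-clearing step that produces the finite exceptional set $\Sigma$ of primes.
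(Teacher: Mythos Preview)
Your argument is correct. The core idea---that the rational vanishing of $H^k(\Gamma,\Q\Gamma)$ for $k>q$ is witnessed by finitely many elements of $\Q\Gamma$, whose denominators involve only finitely many primes---is the same as in the paper, but your packaging is different and somewhat cleaner. The paper proceeds degree by degree: for each $m>q$ it passes to the $(m{+}1)$-skeleton, uses an $\Ext^1$ vanishing to show that $\Hom_{\Z\Gamma}(C_m/\partial C_{m+1},\Q\Gamma)$ is finitely generated, lifts a finite generating set through the surjection $\delta$, and collects the bad primes $\mathcal P_m$ separately for each $m$. You instead build a single partial contracting homotopy $\{s^k\}_{k>q}$ on the free cochain complex $D^\bullet\otimes\Q$ and clear all denominators at once. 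Your route avoids the skeleton/truncation bookkeeping and the auxiliary finite-generation step, at the cost of the (standard) inductive construction of the null-homotopy; the paper's route is more hands-on and makes the finite exceptional set slightly more explicit per degree. Both yield exactly the same conclusion, and your verification that the homotopy identities descend to $\Z_{(p)}\Gamma$ for $p\notin\Sigma$ (hence kill $H^k$ for $k>q$) is sound.
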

\begin{proof}
Let $B\Gamma$ be a finite complex of $\dim B\Gamma=n$. Let $m>d=\cd_\Q\Gamma$. Then $H^n(\Gamma,\Q\Gamma)=0$ and
 in view of Proposition~\ref{skelet}, $H^m(B\Gamma^{(m+1)};\Q\Gamma)=0$.
Let $\{C_i,\partial_i\}$ denote the cellular chain complex for $E\Gamma$.
The cohomology groups of $B\Gamma^{(m+1)}$  are defined by means of the  truncated chain complex 
$$0\to C_{m+1}\to C_{m}\to\cdots\to C_1\to C_0.$$

The condition $H^{m}(B\Gamma^{(m+1)},\Q\Gamma)=0$ means that
 the homomorphism 
$$\delta:\Hom_{\Z\Gamma}( C_{m-1},\Q\Gamma)\to \Hom_{\Z\Gamma}(C_m/\partial(C_{m+1}),\Q\Gamma)$$ defined by the boundary homomorphism $\partial:C_{m}\to C_{m-1}$ is onto.
 Let $\{\alpha_1,\dots,\alpha_k\}\in \Hom_{\Z\Gamma}(C_{m-1},\Q\Gamma)$ be a generating set of the free $\Q\Gamma$-module. 
For all $(m-1)$-cells in $B\Gamma$, $\kappa_1,\dots,\kappa_\ell$, we fix their  lifts $\bar\kappa_1,\dots,\bar\kappa_\ell$ in $E\Gamma$.
Let $$\alpha_j(\bar\kappa_i)=\sum \lambda^{i,j}_kg^{i,j}_k\ \ \text{where}\ \ \lambda^{i,j}_k\in\Q\ \ \text{and}\ \ g^{i,j}_k\in\Gamma.$$ Clearly, there is a finite set of primes $\mathcal P_m=\{p_1,\dots,p_{s_m}\}$ such that
$\lambda^{i,j}_k\in\Z[\frac{1}{p_1\cdots p_s}]$. Then the homomorphism $$\delta_p:\Hom_{\Gamma}( C_{m-1},\Z_{(p)}\Gamma)\to \Hom_{\Gamma}(( C_{m}/\partial C_{m+1},\Z_{(p)}\Gamma)$$ is surjective for all $p\notin\mathcal P_m$. Therefore, $H^m(B\Gamma^{(m+1)};\Z_{(p)}\Gamma)=0$
for all  $p\notin\mathcal P_m$. By Proposition~\ref{skelet}, $H^m(B\Gamma;\Z_{(p)}\Gamma)=0$
for all  $p\notin\mathcal P_m$.
 Hence,
$\cd_{\Z_{(p)}}\Gamma\le d$ for all  $p\notin\bigcup_{d<m\le n}\mathcal P_m$.

The inequality $\cd_{\Q}\Gamma\le\cd_{\Z_{(p)}}\Gamma$ completes the proof.
\end{proof}

\begin{prop}\label{k}
For all groups for every prime $p$ and any $k\in\N$,
$$
\cd_{\Z_{p^k}}\Gamma=\cd_{\Z_p}\Gamma.
$$
\end{prop}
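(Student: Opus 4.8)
The plan is to establish the two inequalities $\cd_{\Z_p}\Gamma\le\cd_{\Z_{p^k}}\Gamma$ and $\cd_{\Z_{p^k}}\Gamma\le\cd_{\Z_p}\Gamma$ separately, working directly from the definition $\cd_R\Gamma=\max_M\{n\mid H^n(\Gamma,M)\ne 0\}$ over $R\Gamma$-modules and exploiting the long exact sequence in group cohomology. Note that the underlying group cohomology $H^*(\Gamma,M)$ depends only on the $\Z\Gamma$-module structure of $M$, not on which coefficient ring we happen to remember, so passing between $\Z_p\Gamma$- and $\Z_{p^k}\Gamma$-module structures costs nothing cohomologically.

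First I would dispose of the easy inequality $\cd_{\Z_p}\Gamma\le\cd_{\Z_{p^k}}\Gamma$. The ring surjection $\Z_{p^k}\to\Z_p$ turns every $\Z_p\Gamma$-module into a $\Z_{p^k}\Gamma$-module without altering its $\Z\Gamma$-module structure. Hence a $\Z_p\Gamma$-module $M$ realizing the top nonvanishing cohomology for $\Z_p$-coefficients is simultaneously a $\Z_{p^k}\Gamma$-module with the same nonzero $H^n(\Gamma,M)$, and the inequality is immediate.

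For the reverse inequality, set $n=\cd_{\Z_p}\Gamma$ and take an arbitrary $\Z_{p^k}\Gamma$-module $M$; the goal is to show $H^m(\Gamma,M)=0$ for all $m>n$. The key device is the $p$-adic filtration
$$
0=p^kM\subseteq p^{k-1}M\subseteq\cdots\subseteq pM\subseteq M,
$$
a chain of $\Z_{p^k}\Gamma$-submodules whose successive quotients $p^iM/p^{i+1}M$ are annihilated by $p$ and are therefore $\Z_p\Gamma$-modules. Since $\cd_{\Z_p}\Gamma=n$, each quotient satisfies $H^m(\Gamma,p^iM/p^{i+1}M)=0$ for $m>n$. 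I would then run a downward induction on $i$, the base case $i=k-1$ being immediate because $p^{k-1}M$ is itself killed by $p$. Feeding the short exact sequences $0\to p^{i+1}M\to p^iM\to p^iM/p^{i+1}M\to 0$ into the long exact cohomology sequence sandwiches $H^m(\Gamma,p^iM)$ between $H^m(\Gamma,p^{i+1}M)$ and $H^m(\Gamma,p^iM/p^{i+1}M)$, both of which vanish for $m>n$, forcing $H^m(\Gamma,p^iM)=0$. Taking $i=0$ yields $H^m(\Gamma,M)=0$ for $m>n$, so $\cd_{\Z_{p^k}}\Gamma\le n$, and combining the two inequalities gives the equality.

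I do not expect a serious obstacle: the argument is purely homological and uses no finiteness hypothesis on $\Gamma$, in agreement with the statement being asserted for all groups. The only points demanding a little care are the bookkeeping of the filtration — verifying $p\cdot(p^iM)=p^{i+1}M$ so that each quotient is genuinely a $\Z_p\Gamma$-module — and arranging the long exact sequence so that the two vanishing inputs occupy precisely the $H^m(\Gamma,p^{i+1}M)$ and $H^m(\Gamma,p^iM/p^{i+1}M)$ slots that squeeze the middle term to zero.
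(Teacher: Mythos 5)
Your argument is correct, and it takes a genuinely different route from the paper. The paper proceeds by induction on $k$, feeding the coefficient sequence $0\to\Z_{p^k}\Gamma\to\Z_{p^{k+1}}\Gamma\to\Z_p\Gamma\to 0$ (inclusion by multiplication by $p$) into the long exact sequence with group-ring coefficients; this implicitly leans on the characterization $\cd_R\Gamma=\max\{n\mid H^n(\Gamma,R\Gamma)\ne 0\}$, which the paper records only for geometrically finite groups, so the paper's proof really tests only the top cohomology with coefficients in $R\Gamma$ itself. You instead work directly from the definition over arbitrary $R\Gamma$-modules: restriction of scalars along $\Z_{p^k}\to\Z_p$ gives the easy inequality, and the $p$-adic filtration $0=p^kM\subseteq\cdots\subseteq pM\subseteq M$ of an arbitrary $\Z_{p^k}\Gamma$-module, whose graded pieces are $\Z_p\Gamma$-modules, gives the reverse inequality by a finite d\'evissage through the long exact sequence. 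Your approach buys a proof that honestly covers the statement as asserted (``for all groups''), with no finiteness hypothesis and no reliance on the group ring being a test module; the paper's induction is shorter to write but is really a special case of your filtration argument applied to the single module $\Z_{p^{k+1}}\Gamma$. All the small points you flag check out: $p\cdot(p^iM)=p^{i+1}M$ so each quotient is killed by $p$, the base case $i=k-1$ is immediate, and the case $\cd_{\Z_p}\Gamma=\infty$ makes the reverse inequality vacuous.
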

\begin{proof}
We apply induction on $k$. The short exact sequence $0\to\Z_{p^k}\to\Z_{p^{k+1}}\to\Z_p\to 0$ produces the short exact sequence of $\Gamma$-modules
$$
0\to \Z_{p^k}\Gamma \to \Z_{p^{k+1}}\Gamma\to\Z_p\Gamma\to 0.
$$
Let $\cd_{\Z_{p^{k+1}}}\Gamma=n$. Then the coefficient long exact sequence 
$$
\dots\to H^n(\Gamma,\Z_{p^k}\Gamma) \to H^n(\Gamma,\Z_{p^{k+1}}\Gamma)\to H^n(\Gamma,\Z_p\Gamma)\to\dots
$$
and the induction assumption imply that $\cd_{\Z_p}\Gamma\ge n$. The same sequence at dimension $n+1$ together with the induction assumption implies that
$\cd_{\Z_p}\Gamma< n+1$.
\end{proof}

\begin{question}
Does the equality $\cd_{Z_p}\Gamma=\cd_{\Z_{(p)}}\Gamma$ for a geometrically finite group $\Gamma$ hold true for all prime numbers $p$ ?
\end{question}

\section{Some remarks}

\subsection{Finitely generated torsion modules over $\Z\Gamma$}
An algebraic reason for the main result of the paper is the fact that for a geometrically finite group $\Gamma$ any finitely generated torsion $\Z\Gamma$-module  does not contain $\Z_{p^\infty}$ for any prime $p$:

\begin{prop}
Let $M$ be a finitely generated $\Z\Gamma$-module for a geometrically finite group $\Gamma$ where the underlying abelian group $M$ is a torsion group. Then for any prime $p$ the subgroup of $p$-torsions
$M[p]\subset M$ is bounded, i.e., there is $r\in\N$ such that $p^rx=0$ for all $x\in M[p]$.
\end{prop}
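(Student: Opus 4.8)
The plan is to reduce the statement to Proposition~\ref{proper} by localizing at $p$. First I would pass from $M$ to its localization $M_{(p)}=M\otimes_\Z\Z_{(p)}$, which is again finitely generated as a $\Z_{(p)}\Gamma$-module: if $x_1,\dots,x_m$ generate $M$ over $\Z\Gamma$, then $x_1\otimes 1,\dots,x_m\otimes 1$ generate $M_{(p)}$ over $\Z_{(p)}\Gamma=\Z\Gamma\otimes\Z_{(p)}$. Presenting $M_{(p)}$ as a quotient $(\oplus^m\Z_{(p)}\Gamma)/J$ by means of this generating set places us exactly in the setting of Proposition~\ref{proper}.

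Next I would identify the underlying abelian group of $M_{(p)}$. Writing the torsion group $M$ as the direct sum of its $\ell$-primary components over all primes $\ell$, tensoring with $\Z_{(p)}$ annihilates the $\ell$-primary component for every prime $\ell\ne p$, since each such $\ell$ is a unit in $\Z_{(p)}$, while it leaves the $p$-primary component $M[p]$ unchanged. Hence the localization map restricts to an isomorphism $M[p]\xrightarrow{\ \cong\ }M_{(p)}$ of $\Z\Gamma$-modules. In particular, because $M$ is a torsion group we obtain $M_{(p)}\otimes_\Z\Q=M\otimes_\Z\Q=0$, so the hypothesis of Proposition~\ref{proper} is satisfied.

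Applying Proposition~\ref{proper} to the presentation above yields a bound $p^nM_{(p)}=0$ for some $n\in\N$. Transporting this through the isomorphism $M[p]\cong M_{(p)}$ gives $p^nM[p]=0$, which is precisely the desired conclusion with $r=n$.

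I expect the genuine content to reside entirely inside Proposition~\ref{proper}, whose inductive argument does the real work of converting rational triviality of a finitely presented $\Z_{(p)}\Gamma$-module into boundedness by a power of $p$; the steps above amount to routine localization bookkeeping. The one point demanding a little care is the identification $M[p]\cong M_{(p)}$, where one must check that localization commutes with the primary decomposition and is $\Gamma$-equivariant, but both facts are formal and present no serious obstacle.
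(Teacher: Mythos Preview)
Your proposal is correct and follows essentially the same route as the paper: both arguments present $M$ as a quotient of $\oplus^m\Z\Gamma$, tensor with $\Z_{(p)}$ to obtain a presentation of $M\otimes\Z_{(p)}\cong M[p]$ as a quotient of $\oplus^m\Z_{(p)}\Gamma$, and then invoke Proposition~\ref{proper}. Your write-up is somewhat more explicit about the identification $M[p]\cong M_{(p)}$ and about verifying the hypothesis $M_{(p)}\otimes\Q=0$, but the underlying argument is the same.
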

\begin{proof}
Let $\xi:\oplus^m\Z\Gamma\to M$ be an epimorphism of $\Z\Gamma$-modules. Then the tensor product with $\Z_{(p)}$ defines an epimorphism
$$\xi\otimes 1_{\Z_{(p)}}:(\oplus^m\Z\Gamma)\otimes\Z_{(p)}=\oplus^m\Z_{(p)}\Gamma\to M[p]=M\otimes\Z_{(p)}$$ with the kernel $J=ker(\xi)$. Proposition~\ref{proper} implies that $M[p]$ has bounded order.
\end{proof}

REMARK. The examples of right-angled Coxeter groups $\Gamma$ in~\cite{Dr1} mentioned in the introduction have  geometrically finite subgroups $\Gamma'$ that admit  infinite  finitely generated $\Z\Gamma'$-modules $M$ which are torsions as the additive groups.

\subsection{Application to topological complexity}
Motivated by Topological Robotics, Michael Farber introduced the notion of topological complexity $TC(X)$ of the configuration space $X$~\cite{F} as the minimal number $k$ such that
$X\times X$ can be covered by $k+1$ open sets $U_0,\dots,U_k$ for each of which there is a continuous map $\phi_i:U_i\to C([0,1],X)$ satisfying the condition 
$\phi_i(x,y)(0)=x$ and $\phi_i(x,y)(1)=y$ for all $(x,y)\in U_i$. Since $TC(X)$ is a homotopy invariant, the topological complexity can be defined for groups $\Gamma$ as $TC(B\Gamma)$.
Computation of topological complexity of groups presents a great challenge. The main theorem of this paper allows to complete the computation of topological complexity of hyperbolic groups~\cite{Dr3}
$$
TC(\Gamma)=2\cd\Gamma
$$
originated by Farber and Mescher~\cite{FM}.
Also in the case of geometrically finite groups the main theorem simplifies the formula for topological complexity of the free product of groups~\cite{DS} to the following
$$TC(\Gamma\ast\Gamma)=2\cd\Gamma.$$

\end{document}